\newtheorem{thm}{Theorem}[section]
\theoremstyle{definition}
\newtheorem{definition}{Definition}
\theoremstyle{remark}
\newtheorem{remark}{Remark}[section]
\numberwithin{equation}{section}
\newcommand{\Ric}{\mbox{Ric}}
\newcommand{\R}{\mathbb R}
\newcommand{\be}{\begin{equation}}
\newcommand{\ee}{\end{equation}}
\newcommand{\bee}{\begin{equation*}}
\newcommand{\eee}{\end{equation*}}
\def\p{\partial}
\def\lf{\left}
\def\ri{\right}
\def\Pi{\displaystyle{\mathbb{II}}}
\def\m{\mathfrak{m}}
\def\a{\alpha}
\def\bci{\mathbf{c}_{_{I}}}
\def\ci{c_{_{I}}}
\def\ccs{c_{_{\rm CS} }}
\def\bccs{\mathbf{  c}_{_{  \rm CS} }}
\def\odd{\text{\rm odd}}
\def\even{\text{\rm even}}
\begin{document}

\title[]
{Evaluation of the ADM mass and center of mass via  the  Ricci tensor}

\author{Pengzi Miao$^1$}
\address[Pengzi Miao]{Department of Mathematics, University of Miami, Coral Gables, FL 33146, USA.}
\email{pengzim@math.miami.edu}
\thanks{$^1$Research partially supported by Simons Foundation Collaboration Grant for Mathematicians \#281105.}

\author{Luen-Fai Tam$^2$}
\address[Luen-Fai Tam]{The Institute of Mathematical Sciences and Department of
 Mathematics, The Chinese University of Hong Kong, Shatin, Hong Kong, China.}
 \email{lftam@math.cuhk.edu.hk}
\thanks{$^2$Research partially supported by Hong Kong RGC General Research Fund  \#CUHK 403108}

\renewcommand{\subjclassname}{
  \textup{2010} Mathematics Subject Classification}
\subjclass[2010]{Primary 83C99; Secondary 53C20}

\date{August 2014; revised in January 2015}

\begin{abstract} We prove directly without using  a density theorem  that
  (i) the  ADM mass defined in the usual way on an asymptotically flat manifold is equal to
 the mass defined intrinsically using the Ricci tensor;
 (ii) the Hamiltonian formulation of center of mass
   and the center of mass
 defined intrinsically using
 the
 Ricci tensor are the same.
\end{abstract}
 
\maketitle

\markboth{Pengzi Miao and Luen-Fai Tam}{Evaluation of the mass and center of mass}

\section{introduction}

Let $(M^n,g)$ be an end of some asymptotically flat manifold, i.e.
$M^n$ is diffeomorphic to $\R^n\setminus B(1)$, where $B(r)=\{|x|<r\}$, such that in the coordinates $\{ x^ i \}$ on $\R^n$,
$g_{ij}-\delta_{ij}$ together with its derivatives decays at infinity, which will be made precise later. We will use the Einstein summation convention throughout  this paper, i.e. summation over any pair of repeated indices.

For each large $r$, let
\be\label{eq-ADM-r-def}
\mathfrak{m}(r)=\frac{1}{2(n-1)\omega_{n-1}}\int_{S_r}(g_{ij,j}-g_{jj,i})\nu_e^i d\sigma_e ,
\ee
where $S_r=\p B(r)$, $\nu_e$ is the unit outward normal and $d\sigma_e$ is the area element on $S_r$ with respect to the Euclidean metric and $\omega_{n-1}$ is the area of the unit sphere in $\R^n$. The ADM mass \cite{ADM61} of $(M^n,g)$ is defined as
\be\label{eq-ADM-def}
\mathfrak{m}=\lim_{r\to\infty} \mathfrak{m}(r)
\ee
provided the limit exists. Under
suitable conditions,
it was proved by Bartnik \cite{Bartnik-1986}
 and Chru\'sciel \cite{Chrusciel} independently
 that $\mathfrak{m}$ is defined and does not  depend on the choice of  coordinates.

In the general relativity literature (see Ashtekar--Hansen \cite{AH-1978} and Chru\'{s}ciel \cite{Chrusciel-cqg}),
it is also known that the ADM mass  $ \m $ can   be  computed using the curvature of $g$ as follows\footnote{We  thank Piotr Chru\'{s}ciel  for bringing our attention the references \cite{AH-1978} and \cite{Chrusciel-cqg}.}:
Consider
\be\label{eq-Mass-i-r-def}
\mathfrak{m}_{_I}(r)=\frac{1}{(n-1)(2-n)\omega_{n-1}}\int_{S_r}\lf(\Ric -\frac12R_gg\ri)(X,\nu_g) d\sigma_g ,
\ee
where $\Ric$ and $R_g$ are the Ricci tensor and  the scalar curvature of $g$ respectively,
$X$ is the Euclidean conformal Killing vector field $x^i\frac{\p}{\p x^i}$,
$\nu_g$ is the unit outward normal and $d\sigma_g$ is the area element on $S_r$ with respect to  $g$.
If the limit
\be\label{eq-Mass-i-def}
\mathfrak{m}_{_I}=\lim_{r\to\infty}\mathfrak{m}_{_I}(r)
\ee
 exists, then
\be \label{eq-mass-equal}
\m = \m_{_I} .
\ee
Formula \eqref{eq-mass-equal} was also suggested by Schoen (cf. \cite{Huang-2009, CorvinoPollack})
in connection with  the generalized  Pohozaev identity \cite{Schoen}.
(For a recent application of \eqref{eq-mass-equal}
to the content at infinity of asymptotically flat   metrics,
see   \cite[Proposition 2.2]{Carlotto-Schoen-2014}.)
It can be easily checked that \eqref{eq-mass-equal} holds for metrics
which are conformally  flat  (up to higher order) near infinity.
Thus a common  proof of \eqref{eq-mass-equal} is to apply
a  density theorem from \cite{CorvinoSchoen2006} or \cite{SchoenYau81}
to reduce the general case to metrics  with { harmonic   asymptotics}.
An outline of such an approach
was given  in  \cite{Huang-2012}.

When
 $\m \neq 0$, there exist  several notions of center of mass for $(M, g)$ (cf. \cite{Huang-2009, Huang-2012}).
Similar to the definition of $\m$,
the Hamiltonian formulation of center of mass  $\bccs$,
 proposed by  Regge--Teitelboim \cite{RT-74} (also by Beig--\'{O} Murchadha \cite{BO-87} and Corvino--Schoen  \cite{CorvinoSchoen2006})
is given as follows:  let
 \bee \label{eq-CS-r}
\begin{split}
\ccs^\a(r)=&\frac1{2(n-1)\omega_{n-1} \mathfrak{m}}\int_{S_r}
\lf[ x^\a(g_{ij,i}-g_{ii,j})\nu_e^j-\lf(g_{i\a}\nu_e^i-g_{ii}\nu_e^\a\ri)\ri]d\sigma_e ,
\end{split}
\eee
where $\alpha = 1, \cdots, n$.
 Note that $g_{i\a}, g_{ii}$ in the second term in the integrand can be replaced by $h_{i\a}, h_{ii}$
where $h_{ij}:=g_{ij}-\delta_{ij}$. Let
$$\bccs(r)=(\ccs^1(r),\ccs^2(r),\dots,\ccs^n(r))  , $$
then
\be\label{eq-CS}
\bccs=\lim_{r\to\infty}\bccs(r)
\ee
provided the limit exists. (Here we use the notation $\bccs$ following  \cite{Huang-2009}.)
Similar to  \eqref{eq-Mass-i-def},  Schoen suggested an intrinsic way to define
the center of mass   (cf. \cite{Huang-2009}):
for $\a=1,\dots,n$, let $Y_{(\a)}$ be the Euclidean conformal Killing vector field
$$\lf(|x|^2\delta^{\a i}-2x^\a x^i\ri)\frac{\p}{\p x^i},$$
define
\be\label{eq-I-r}
\ci^\a(r)=\frac{1}{2(n-1)(n-2)\omega_{n-1}\mathfrak{m}}\int_{S_r}\lf(\Ric -\frac12R_gg\ri)(Y_{(\a)},\nu_g) d\sigma_g
\ee
and
$$\bci(r)=(\ci^1(r),\dots,\ci^n(r)) .$$
The intrinsic  center of mass is defined as
\be\label{eq-I}
\bci=\lim_{r\to\infty}\bci(r)
\ee
provided the limit exists.
We also want to mention that Huisken-Yau \cite{Huisken-Yau1996} and Ye \cite{Ye1996}
constructed a foliation  of stable  constant mean curvature spheres near infinity  on asymptotically
Schwarzschild manifolds via different methods.
Huisken-Yau   \cite{Huisken-Yau1996} proposed a geometric
definition of center of mass using the foliation.
It was proved by Huang  \cite{Huang-2009} that
under  the  Regge--Teitelboim  condition (see Theorem \ref{t-main} (b)),
all these  notions of center of mass are equivalent.
In order to show $\bccs = \bci$, in \cite{Huang-2009} Huang first proved a density theorem
for metrics satisfying  the  Regge--Teitelboim  condition
and then apply it to  reduce  the general  case to metrics with
  harmonic   asymptotics.
  Recently, in  \cite{Nerz2014}, Nerz also  used $\m_{_I}$ as
  the definition of mass in his  construction of foliations of constant mean curvature spheres for asymptotically flat manifolds
  under weaker   asymptotic conditions.

In this paper, we give a direct proof of $ \bccs = \bci $ and $ \m = \m_{_I}$
without using density theorems. More precise, we will prove the following:

\begin{thm}\label{t-main}
On an asymptotically flat end $(M^n, g)$ which satisfies
$g_{ij}-\delta_{ij}=o_2(|x|^{-\frac{n-2}2})$,
 one has
\begin{enumerate}
   \item [(a)]
\bee
\lim_{r\to\infty}\lf(\mathfrak{m}(r)-\mathfrak{m}_{_I}(r)\ri)=0.
\eee
   \item [(b)] If  the  Regge--Teitelboim condition holds, i.e. $g_{ij}^{\text{\rm odd}}(x)= o_2(|x|^{-\frac n2})$, then
\bee
\lim_{r\to\infty}\lf(\bccs(r)-\bci(r)\ri)=0.
\eee
 \end{enumerate}
\end{thm}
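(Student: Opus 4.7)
The plan is to prove both parts directly, by an algebraic identity at the linearized level combined with quadratic error estimates that use the $o_2(|x|^{-(n-2)/2})$ decay of $h_{ij}:=g_{ij}-\delta_{ij}$. The crucial step is a ``superpotential'' identity that rewrites the linearized Einstein tensor contracted with the conformal Killing field $X$ (respectively $Y_{(\alpha)}$) as the ADM current (respectively the Hamiltonian center-of-mass current) plus the divergence of a tensor antisymmetric in its two indices; the flux of such a divergence through any closed hypersurface in $\R^n$ vanishes by Stokes' theorem. This reduces the problem to an $o(1)$ bound on the purely quadratic remainder.

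For part (a), I write $\Ric_{ij}=L(h)_{ij}+Q_{ij}(h,\partial h)$, where $L(h)_{ij}=\tfrac12(h_{ik,jk}+h_{jk,ik}-\Delta h_{ij}-h_{kk,ij})$ is the linearization and $Q_{ij}$ is quadratic, and decompose $R_g$ analogously. The main algebraic step is to transfer each $\partial_k$ off $h$ onto $x^i$ in $2G^L_{ij}(h)\,x^i$ by the product rule and collect terms, obtaining
\[ 2\Big(L(h)_{ij}-\tfrac12 L(h)_{kk}\,\delta_{ij}\Big)x^i \;=\; -(n-2)\big(h_{jk,k}-h_{kk,j}\big) \;+\; \partial_k V^{jk}, \]
where
\[ V^{jk} \;=\; x^i(h_{ik,j}-h_{ij,k}) + (x^k h_{jl,l}-x^j h_{kl,l}) + (x^j h_{ll,k}-x^k h_{ll,j}). \]
Each of the three summands is manifestly antisymmetric in $(j,k)$, so $\int_{S_r}\partial_k V^{jk}\nu_e^j\,d\sigma_e=0$, and matching the normalizations in \eqref{eq-ADM-r-def} and \eqref{eq-Mass-i-r-def} gives the identity at the linear level.

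It then remains to absorb the nonlinear errors in passing from $(S_r,\delta)$-integrals of the linearization to $(S_r,g)$-integrals of the full Einstein tensor. Under $h=o_2(|x|^{-(n-2)/2})$ one has $|h|=o(r^{-(n-2)/2})$, $|\partial h|=o(r^{-n/2})$, $|\partial^2 h|=o(r^{-(n+2)/2})$, so $|Q_{ij}|=o(r^{-n})$. Paired with $|X|\lesssim r$ and $|S_r|\lesssim r^{n-1}$ this yields an $o(1)$ bound on the Ricci quadratic error, and the same counting controls the error from replacing $g^{ij}$ by $\delta^{ij}$ in the scalar curvature trace and $\nu_g\,d\sigma_g$ by $\nu_e\,d\sigma_e$.

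Part (b) follows the same template with $X$ replaced by $Y_{(\alpha)}=(|x|^2\delta^{\alpha i}-2x^\alpha x^i)\partial_i$. An analogous integration by parts on $2G^L_{ij}(h)Y^i_{(\alpha)}$ produces, modulo the divergence of an antisymmetric 2-tensor, both the weighted ADM current $x^\alpha(h_{jk,k}-h_{kk,j})\nu_e^j$ and the extra boundary terms $h_{i\alpha}\nu_e^i-h_{ii}\nu_e^\alpha$ appearing in $\ccs^\alpha$. The main obstacle here is that $|Y_{(\alpha)}|\lesssim r^2$, so the naive quadratic error bound is $o(r^{-n})\cdot r^2\cdot r^{n-1}=o(r)$, not $o(1)$. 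This is where the Regge--Teitelboim hypothesis is decisive: decomposing $h=h^{\text{even}}+h^{\text{odd}}$, the integrand $Q_{ij}(h)Y^i_{(\alpha)}\nu_e^j$ restricted to the homogeneous-parity pieces $Q(h^{\text{even}})$ or $Q(h^{\text{odd}})$ is odd in $x$ (since $Y^i_{(\alpha)}$ is even and $\nu_e^j$ is odd, while both of these pieces of $Q$ are even), hence integrates to zero on $S_r$ by reflection symmetry; the remaining cross terms $Q(h^{\text{even}},h^{\text{odd}})$ inherit the improved decay $o(r^{-n-1})$ from $h^{\text{odd}}=o_2(|x|^{-n/2})$, gaining exactly the factor of $r^{-1}$ needed to bring them back to $o(1)$.
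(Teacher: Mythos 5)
Your strategy is the same as the paper's: match the linearized Einstein current against the ADM (resp.\ Regge--Teitelboim) integrand modulo a divergence with vanishing flux, then control the nonlinear and gauge errors by the decay, with parity under Regge--Teitelboim in (b). Part (a) is correct: I checked that your $V^{jk}$ satisfies $\partial_k V^{jk}=2\big(L(h)_{ij}-\tfrac12L(h)_{kk}\delta_{ij}\big)x^i+(n-2)(h_{jk,k}-h_{kk,j})$, and the analogous identity for $Y_{(\alpha)}$ does exist (linearized Bianchi plus $\partial_jY^i+\partial_iY^j=-4x^\alpha\delta_{ij}$); the paper obtains the same linear-level statement by integrating by parts twice over the enclosed region. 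One minor point for both parts: $g$ is only $C^2$, so the flux-vanishing of $\partial_kV^{jk}$ (which uses $\partial_j\partial_kV^{jk}=0$) should be justified by approximating $g$ by smooth metrics in $C^2$ on compact sets, as the paper does.

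In part (b) your error accounting has two concrete gaps. First, you apply parity only to $Q_{ij}(h)Y^i\nu_e^j$, but the conversion errors in passing from $\int(\Ric-\tfrac12R_gg)(Y,\nu_g)\,d\sigma_g$ to $\int G^L_{ij}Y^i\nu_e^j\,d\sigma_e$ (the $\nu_g-\nu_e$, $d\sigma_g=f\,d\sigma_e$, and $g_{ij}$ vs.\ $\delta_{ij}$ replacements) are each of naive size $o(r^{-\frac{n+2}2})\cdot r^2\cdot o(r^{-\frac{n-2}2})\cdot r^{n-1}=o(r)$ and also need the parity input: the leading part of $\nu_g-\nu_e$ is odd and that of $f-1$ is even, with $o(r^{-\frac n2})$ remainders. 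Second, the Ricci remainder is not purely quadratic; cubic terms are only $o(|x|^{-\frac{3n-2}2})$, which against $|Y|\sim r^2$ over $S_r$ gives $o(r^{\frac{4-n}2})$, not $o(1)$ for $n=3$, so the parity/improved-decay splitting must be applied to the full nonlinear remainder, not just its quadratic part. Both are fixed by your own mechanism, most cleanly by recording, as the paper does, that $R_{ij}=(R_{ij})^{\even}+o(|x|^{-2-\frac n2})$, $R_g=(R_g)^{\even}+o(|x|^{-2-\frac n2})$, $\nu_g^i-\nu_e^i=(\nu_g^i-\nu_e^i)^{\odd}+o(|x|^{-\frac n2})$, and $f-1=(f-1)^{\even}+o(|x|^{-\frac n2})$.
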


As mentioned earlier, part  (a) of Theorem \ref{t-main} is known to experts in the relativity  community;
(b) was first proved by Huang \cite{Huang-2009} (under slightly different decay assumptions).
Our contribution   is to provide more elementary and simpler proofs.

The structure of the paper is as follows. In the next section,  we prove part (a) of Theorem \ref{t-main}.
In section 3, we prove  part  (b).

  {\it Acknowledgement:} The authors would like to thank Richard Schoen for some useful discussions.

\section{$\mathfrak{m}=\mathfrak{m}_{_I}$}

On  an asymptotically flat end $(M^n, g)$,
as  our discussion is only  near  the infinity, we  may extend $(M^n, g)$
so that $M$ is diffeomorphic to $\R^n$.
 We will assume this throughout  the rest of  the  paper.

 \begin{definition}
 Let $f$ be a function defined near infinity of $\R^n$. We say that $f=o_k(|x|^{-\tau})$, if $f$ is in $C^k$ and $|x|^{|\a|+\tau}|\p ^\a f(x)|=o(1)$ as $x\to\infty$ for all $\a$ with $0\le |\a|\le k$.
 \end{definition}

\begin{thm}\label{t-ADM}  Suppose
$g_{ij}-\delta_{ij}=o_2(|x|^{-\frac{n-2}2})$ on $(M^n, g)$. 
Then
\bee
\lim_{r\to\infty}\lf(\mathfrak{m}(r)-\mathfrak{m}_{_I}(r)\ri)=0.
\eee
In fact, the following is true. Let   $ \{ D_l \}_{l=1}^\infty $ be a sequence of  bounded open sets with Lipschitz boundary 
  $\Sigma_l:=\p D_l$ which has  area $|\Sigma_l|$. Let $r_l:=\inf_{x\in \Sigma_l}|x|$. Assume
$$
\lim_{ l\to\infty}r_l=\infty \ \ {\text and}\ \  |\Sigma_l|\le Cr_l^{n-1}
$$
for some constant $C$ independent of $l$. Then
\bee
\lim_{ l \to\infty}\lf(\mathfrak{m}(l)-\mathfrak{m}_{_I}(l)\ri)=0,
\eee
where $\mathfrak{m}(l)$ is the RHS in \eqref{eq-ADM-r-def} integrating over $\Sigma_l$ and $\mathfrak{m}_{_I}(l)$ is the RHS in \eqref{eq-Mass-i-r-def}  integrating over $\Sigma_l$.
\end{thm}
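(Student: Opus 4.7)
My plan is to split $\mathfrak{m}(l) - \mathfrak{m}_{_I}(l)$ into a piece linear in $h_{ij} := g_{ij} - \delta_{ij}$ that cancels identically for every $l$ (by the linearized second Bianchi identity and the divergence theorem), plus a quadratic remainder absorbed by the critical decay rate $o_2(|x|^{-(n-2)/2})$.  To isolate the linear cancellation, let $U^i := h_{ij,j} - h_{jj,i}$ and let $R^{(1)}_{ij}$, $R^{(1)}$, $G^{(1)}_{ij}$ denote the parts linear in $h$ of $\Ric_{ij}$, $R_g$, and $\Ric - \tfrac12 R_g g$ respectively.  A direct computation gives three exact identities valid for any $h$:
\[
\p_i U^i = R^{(1)}, \qquad \p_j G^{(1)}_{ij} = 0, \qquad \delta^{ij} G^{(1)}_{ij} = \tfrac{2-n}{2} R^{(1)}.
\]
Applying the Euclidean divergence theorem on $D_l$ to $U^i$ and to the vector field $G^{(1)}_{ij} x^i$ (using $\p_j x^i = \delta_j^i$) yields
\[
\int_{\Sigma_l} U^i \nu_e^i \, d\sigma_e = \int_{D_l} R^{(1)} \, dV_e, \qquad \int_{\Sigma_l} G^{(1)}(X,\nu_e) \, d\sigma_e = \tfrac{2-n}{2} \int_{D_l} R^{(1)} \, dV_e,
\]
so that $\int_{\Sigma_l} \lf[ U^i \nu_e^i + \tfrac{2}{n-2} G^{(1)}(X,\nu_e) \ri] d\sigma_e = 0$ exactly, independently of $l$.

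Next I would bound the quadratic remainder.  Decompose
\[
G(X,\nu_g) d\sigma_g - G^{(1)}(X,\nu_e) d\sigma_e = (G-G^{(1)})(X,\nu_g) d\sigma_g + G^{(1)}(X,\nu_g - \nu_e) d\sigma_g + G^{(1)}(X,\nu_e)(d\sigma_g - d\sigma_e).
\]
Expanding $\Ric$, $R_g$, $\nu_g$, and $d\sigma_g$ in powers of $h = g - \delta$ gives $|G - G^{(1)}| = O(|h||\p^2 h| + |\p h|^2)$, $|\nu_g - \nu_e| = O(|h|)$, and $|d\sigma_g/d\sigma_e - 1| = O(|h|)$.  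The hypothesis $|\p^k h| = o(|x|^{-(n-2)/2 - k})$ for $k = 0,1,2$ then shows that each summand above, multiplied by $|X| = |x|$, is bounded pointwise by $\varepsilon(|x|) \, |x|^{-n+1}$ with $\varepsilon(|x|) \to 0$ at infinity.  On $\Sigma_l$ one has $|x|^{-n+1} \le r_l^{-n+1}$, and integration against $|\Sigma_l| \le C r_l^{n-1}$ produces a remainder of size $\le C \varepsilon(r_l) \to 0$.  Combining with the exact identity above and dividing by $2(n-1)\omega_{n-1}$ yields $\mathfrak{m}(l) - \mathfrak{m}_{_I}(l) \to 0$.

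The main technical point will be verifying the pointwise bound on $|G - G^{(1)}|$: one expands $\Ric$ and the product $R_g g$ up to quadratic order, taking care of the cross term $R^{(1)} h_{ij}$ appearing in the quadratic part of $\tfrac12 R_g g$, and observes that every resulting term is schematically of the form $h \cdot \p^2 h$ or $(\p h)^2$.  The exponent $(n-2)/2$ is critical in the sense that the $o$-decay assumption is exactly what is needed for these quadratic contributions to be absorbed against $|X| \sim |x|$ and the area bound $|\Sigma_l| \le C r_l^{n-1}$; this is precisely the reason the present argument avoids any appeal to a density theorem.
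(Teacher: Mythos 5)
Your proposal is correct and is essentially the paper's own argument in a slightly more structural packaging: the exact cancellation you derive from $\p_j G^{(1)}_{ij}=0$ and $\delta^{ij}G^{(1)}_{ij}=\tfrac{2-n}{2}R^{(1)}$ is precisely the integration-by-parts identity \eqref{eq-mass-1ii} proved over $D_l$ (the paper's boldfaced cancellation $\sum_{k,j}(-g_{ki,kjj}+g_{ij,kkj})=0$ is exactly the linearized contracted Bianchi identity, and combining \eqref{eq-mass-1ii} with \eqref{eq-mass-2} gives your statement $\int_{\Sigma_l}\bigl[U^i\nu_e^i+\tfrac{2}{n-2}G^{(1)}(X,\nu_e)\bigr]d\sigma_e=0$), while your quadratic-remainder bounds using $|X|\sim|x|$, the $o_2(|x|^{-\frac{n-2}{2}})$ decay and $|\Sigma_l|\le Cr_l^{n-1}$ reproduce \eqref{eq-normal-compare}--\eqref{eq-mass-3}. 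The one point to add is the paper's smoothing step: since $g$ is only assumed $C^2$, the identity $\p_jG^{(1)}_{ij}=0$ involves third derivatives, so the divergence-theorem step should be carried out for smooth approximations $g^{(m)}\to g$ in $C^2$ on compact sets and then passed to the limit, exactly as in the paper.
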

\begin{proof}
The condition $g_{ij}-\delta_{ij}=o_2(|x|^{-\frac{n-2}2})$ shows
\be \label{eq-normal-compare}
 |\nu^i_e-\nu_g^i|=o(r_l^{-\frac{n-2}2}), \ \ d\sigma_g=\lf(1+o(r_l^{-\frac{n-2}2})\ri)d\sigma_e
 \ee
on $\Sigma_l$, and
\be\label{eq-Ricci}
\begin{split}
2R_{ij}(x)= & \ 2\frac{\p}{\p x^k}\Gamma^k_{ji}-2\frac{\p}{\p x^j}\Gamma^k_{ki}+
2\Gamma^k_{kl}\Gamma^l_{ji}-2\Gamma^k_{jl}\Gamma^l_{ki}\\
= & \ \frac{\p}{\p x^k}\lf(g_{ki,j}+g_{kj,i}-g_{ij,k}\ri)-\frac{\p}{\p x^j}\lf(g_{ki,k}+g_{kk,i}-g_{ki,k}\ri)+o(|x|^{-n})\\
= & \ g_{ki,kj}+g_{kj,ki}-g_{ij,kk}-g_{kk,ij}+o(|x|^{-n})\\
= &  \ o(|x|^{-\frac{n+2}2}).
\end{split}
\ee

Using the fact that $|\Sigma_l|\le Cr_l^{n-1}$, by \eqref{eq-Ricci} as $l\to\infty$ we have:
\be\label{eq-mass-1i}
\begin{split}
 -2\int_{\Sigma_l}  R_{ij}x^i\nu_g^j  d\sigma_g
=&-2\int_{\Sigma_l}R_{ij}x^i\nu_e^j d\sigma_g-2\int_{\Sigma_l}R_{ij}x^i(\nu_g^j-\nu_e^j) d\sigma_g\\
=&-2\int_{\Sigma_l}R_{ij}x^i\nu_e^j d\sigma_e+o(1)\\
=&\int_{\Sigma_l}\lf(-g_{ki,kj}-g_{kj,ki}+g_{ij,kk}+g_{kk,ij}\ri)x^i\nu^j_e d\sigma_e+o(1)  .
\end{split}
\ee
We claim that
 \be\label{eq-mass-1ii}
 \begin{split}
&\int_{\Sigma_l}\lf(-g_{ki,kj}-g_{kj,ki}+g_{ij,kk}+g_{kk,ij}\ri)x^i\nu^j_e d\sigma_e\\
= &  (n-2)\int_{\Sigma_l} \lf(g_{kj,k }-g_{kk,j }\ri)     \nu_e^j d\sigma_e   +\int_{\Sigma_l}  \lf(-g_{kj,k j}+g_{kk, jj}\ri)x^i\nu^i_e d\sigma_e.
\end{split}
\ee
If the claim is true, then by   \eqref{eq-normal-compare} and \eqref{eq-mass-1i}, we have
\be\label{eq-mass-1}
\begin{split}
&-2\int_{\Sigma_l}  R_{ij}x^i\nu_g^j  d\sigma_g\\
=&(n-2)\int_{\Sigma_l} \lf(g_{kj,k }-g_{kk,j }\ri)\nu_g^j d\sigma_g+\int_{\Sigma_l}  \lf(-g_{kj,k j}+g_{kk, jj}\ri)x^i\nu^i_e d\sigma_e+o(1).
\end{split}
\ee
 To verify \eqref{eq-mass-1ii},
viewing each $g_{ij}$   as functions on  $\R^n$, we may find sequences of smooth functions $ \{ g_{ij}^{(m)} \} $ such that $g_{ij}^{(m)}=g_{ji}^{(m)}$
for all  $m$ and  $ \{ g_{ij}^{(m)} \}$ converges   to $g_{ij}$   uniformly in $C^2$ norm  on any compact sets.
Hence to prove \eqref{eq-mass-1ii}, we may assume that $g$ is $C^3$. 
 Integrating by parts on $D_l$ (since $\p D_l$ is Lipschitz), we have
\bee
\begin{split}
&\int_{\Sigma_l}\lf(-g_{ki,kj}-g_{kj,ki}+g_{ij,kk}+g_{kk,ij}\ri)x^i\nu^j_e d\sigma_e\\
 &\int_{D_l}\frac{\p}{\p x^j}\lf[\lf(-g_{ki,kj}-g_{kj,ki}+g_{ij,kk}+g_{kk,ij}\ri)x^i\ri] dv_e \\
=&\int_{D_l} \lf(-g_{ki,kj}-g_{kj,ki}+g_{ij,kk}+g_{kk,ij}\ri)\delta^i_j  dv_e\\
&+\int_{D_l} \lf(-g_{ki,kjj}-g_{kj,kij}+g_{ij,kkj}+g_{kk,ijj}\ri)x^i dv_e \\
=&2\int_{D_l} \lf(g_{kk,jj}-g_{kj,kj}\ri) dv_e
 +\int_{D_l}\lf(-g_{kj,kij}+g_{kk,ijj}\ri)x^i dv_e\\
 &\quad(\text{\bf since $\sum_{k,j}(-g_{ki,kjj}+g_{ij,kkj})=0$})\\
=&(n-2)\int_{D_l} \lf(g_{kj,kj}-g_{kk,jj}\ri) dv_e
 +\int_{D_l} \frac{\p}{\p x^i}\lf(\lf(-g_{kj,k j}+g_{kk, jj}\ri)x^i\ri)dv_e   \\
=&(n-2)\int_{\Sigma_l} \lf(g_{kj,k }-g_{kk,j }\ri)\nu_e^j d\sigma_e+\int_{\Sigma_l}  \lf(-g_{kj,k j}+g_{kk, jj}\ri)x^i\nu^i_e d\sigma_e ,
\end{split}
\eee
where $dv_e$ is the volume element with respect to the Euclidean metric.
 This proves \eqref{eq-mass-1ii}.

On the other hand,  by \eqref{eq-Ricci}, we have
\be\label{eq-mass-2}
\begin{split}
R_g(x)
=&\sum_i R_{ii}+o(|x|^{-n})\\
=&\frac12\sum_{i,k}
\lf(g_{ki,ki}+g_{ki,ki}-g_{ii,kk}-g_{kk,ii}\ri)+o(|x|^{-n})\\
=& g_{ik,ik}-g_{kk,ii} +o(|x|^{-n})\\
=&o(|x|^{-\frac{n+2}2}) .
\end{split}
\ee
Hence
\be\label{eq-mass-3}
\begin{split}
\int_{\Sigma_l}  \lf(-g_{kj,k j}+g_{kk, jj}\ri)x^i\nu^i_e d\sigma_e=&-\int_{\Sigma_l}  R_gx^i\nu^i_e d\sigma_e+o(1)\\
=&-\int_{\Sigma_l}  R_gg(X,\nu_g)  d\sigma_g+o(1) .
\end{split}
\ee
  Combining  this with  \eqref{eq-mass-1}, we conclude that
\be
-2\int_{\Sigma_l}  \lf(\Ric-\frac12 R_g g\ri)(X, \nu_g)  d\sigma_g=(n-2)\int_{\Sigma_l} \lf(g_{kj,k }-g_{kk,j }\ri)\nu_g^j d\sigma_g+o(1)
\ee
as $l\to\infty$. From this it is easy to see the theorem is true.
\end{proof}

\section{$\bccs=\bci$}

\begin{definition}
For a function $f(x)$ defined on $\R^n$, let
$$f^{\text{\rm odd}}(x):=\frac12( f(x)-f(-x)),\ \ f^{\text{\rm even}}(x):=\frac12( f(x)+f(-x)).$$
\end{definition}

\begin{thm}\label{t-CS}
Suppose
$g_{ij}-\delta_{ij}=o_2(|x|^{-\frac{n-2}2})$ on $(M^n, g)$.
Suppose $ g$ also satisfies the Regge--Teitelboim condition:
$g_{ij}^{\text{\rm odd}}(x)= o_2(|x|^{-\frac n2})$. Then
\bee
\lim_{r\to\infty}\lf(\bccs(r)-\bci(r)\ri)=0.
\eee
\end{thm}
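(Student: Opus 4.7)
The plan is to mirror the strategy of Theorem~\ref{t-ADM}: start from $-2\int_{S_r} R_{ij}Y_{(\alpha)}^i\nu_g^j\,d\sigma_g$, replace $R_{ij}$ by the linear part $\frac12(g_{ki,kj}+g_{kj,ki}-g_{ij,kk}-g_{kk,ij})$ modulo $\Gamma\Gamma$ error, pass from $(\nu_g,d\sigma_g)$ to $(\nu_e,d\sigma_e)$, and then apply the divergence theorem on $B_r$ to convert the surface integral into boundary integrals that match the Hamiltonian expression for $\ccs^\alpha(r)$. The essentially new feature compared with the ADM case is that $Y_{(\alpha)}$ is quadratic in $x$, so the integration by parts produces an additional bulk integral of the form $\int_{B_r} x^\alpha P\,dv_e$, where $P=-g_{kj,kj}+g_{kk,jj}$, and a second divergence-theorem step is needed; that second step is precisely what will generate the Regge--Teitelboim correction $-(g_{i\alpha}\nu_e^i-g_{ii}\nu_e^\alpha)$.

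Concretely, with $Q_{ij}=-g_{ki,kj}-g_{kj,ki}+g_{ij,kk}+g_{kk,ij}$, I will verify that $\p_j Q_{ij}=\p_i P$ (using the same cancellation $-g_{ki,kjj}+g_{ij,kkj}=0$ as in Section~2) and $Q_{ij}\p_j Y_{(\alpha)}^i=-2x^\alpha Q_{ii}=-4x^\alpha P$ (using the symmetry of $Q$ and the fact that $Y_{(\alpha)}$ is a conformal Killing field of $\delta$ with $\p_i Y_{(\alpha)}^i=-2nx^\alpha$). These identities give
\begin{equation*}
\int_{S_r} Q_{ij}Y_{(\alpha)}^i\nu_e^j\,d\sigma_e = 2(n-2)\int_{B_r} x^\alpha P\,dv_e + \int_{S_r} P\, Y_{(\alpha)}^i\nu_e^i\,d\sigma_e.
\end{equation*}
Writing $P=\p_j(-g_{kj,k}+g_{kk,j})$ and integrating by parts once more yields
\begin{equation*}
\int_{B_r} x^\alpha P\,dv_e = -\int_{S_r} x^\alpha(g_{kj,k}-g_{kk,j})\nu_e^j\,d\sigma_e + \int_{S_r}(g_{k\alpha}\nu_e^k - g_{kk}\nu_e^\alpha)\,d\sigma_e,
\end{equation*}
which are exactly the two pieces of $\ccs^\alpha(r)$. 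The remaining boundary term $\int_{S_r} P\,Y_{(\alpha)}^i\nu_e^i\,d\sigma_e$ will pair with the $-\frac12 R_g g(Y_{(\alpha)},\nu_g)$ contribution on the $\ci^\alpha$ side (using $P=-R_g$ up to error, exactly as in the mass proof) and cancel, leaving the desired identity $\ccs^\alpha(r)-\ci^\alpha(r)\to 0$.

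The main obstacle is the error analysis: since $|Y_{(\alpha)}|=O(r^2)$, the direct decay bounds used in Theorem~\ref{t-ADM} would give errors of order $O(r)$. To close this gap I plan to exploit the parity symmetry of $S_r=\p B_r$: the measure $\nu_e^j\,d\sigma_e$ is odd under $x\mapsto -x$ while $Y_{(\alpha)}^i$ is even, so only the odd part of each scalar integrand contributes. Under the Regge--Teitelboim hypothesis $g_{ij}^{\text{odd}}=o_2(|x|^{-n/2})$, the odd parts of the $\Gamma\Gamma$ terms in $R_{ij}$ and $R_g$, of the difference $\nu_g^j\,d\sigma_g-\nu_e^j\,d\sigma_e$, and of the remainder $P+R_g$, all gain an extra factor of $|x|^{-1}$ in their decay relative to the even parts, which is precisely enough to make every accumulated error $o(1)$. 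The careful parity bookkeeping---splitting each relevant quantity into even and odd parts and checking which combinations survive sphere integration---is the main technical content of the argument.
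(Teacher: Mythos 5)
Your plan is correct and follows essentially the same route as the paper's own proof: linearize $\Ric$ and $R_g$ modulo $\Gamma\Gamma$ terms, pass between $(\nu_g,d\sigma_g)$ and $(\nu_e,d\sigma_e)$, integrate by parts twice on $B(r)$ using $\p_i Y_{(\a)}^i=-2nx^\a$ and the symmetry of the linearized Ricci expression to produce the extra boundary term $-\lf(g_{i\a}\nu_e^i-g_{ii}\nu_e^\a\ri)$, and control every $O(r)$-sized error by the even/odd decomposition available under the Regge--Teitelboim condition, exactly as in the paper's parity estimates for $g$, $\p g$, $\Gamma$, $\Ric$, $R_g$, $\nu_g-\nu_e$ and $d\sigma_g/d\sigma_e$. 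The one small point you leave implicit (handled in the paper by approximating $g$ in $C^2_{loc}$ by smooth metrics) is that the two integrations by parts use third derivatives of $g$, which the hypotheses alone do not provide.
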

\begin{proof} For $x\in S_r$, we have the following:
 \be\label{eq-cm-1}
\left\{
  \begin{array}{ll}
    g_{ij}(x)=& \lf(g_{ij}\ri)^{\even}(x)+o(|x|^{-\frac n2}); \\
     \frac{\p g_{ij}}{\p x^k}(x)=&\lf(\frac{\p g_{ij}}{\p x^k} \ri)^{\odd}(x) +o(|x|^{-1-\frac n2}); \\
    \Gamma_{ij}^k(x)=&\lf(\Gamma_{ij}^k\ri)^{\odd}(x)  +o(|x|^{-1-\frac n2}) ; \\
    R_{ij}(x)=&\lf(R_{ij}\ri)^{\even}(x)+o(|x|^{-2-\frac n2})    ;\\
R_g(x)=&  (R_g)^{\even}(x)+o(|x|^{-2-\frac n2}).\\
  \end{array}
\right.
\ee
We also have
\begin{align}
\nu_g^i(x)-\nu_e^i(x) & = \lf(\nu_g^i -\nu_e^i \ri)^{\odd}(x)+o(|x|^{-\frac n2}) \label{eq-cm-2} , \\
f(x)-1 & =(f-1)^{\even}(x)+o(|x|^{-\frac n2}) \label{eq-cm-3} ,
\end{align}
where $f(x)$ is defined by    $d\sigma_g(x)=f(x)d\sigma_e(x)$.
For  each $\a=1,\dots,n$,
  define  $Y_{(\a)}(x)= Y_{(\a)}^i \frac{\p}{\p x^i}$, where
$Y_{(\a)}^i=\lf( |x|^2\delta^{\a i}-2x^\a x^i \ri)$.
Note that $Y^i(x)=Y^i(-x)$. By \eqref{eq-cm-1}--\eqref{eq-cm-3},
we have  as $r\to\infty$
\be\label{eq-cm-4}
\begin{split}
\int_{S_r}\Ric(Y,\nu_g)d\sigma_g=&\int_{S_r}\Ric(Y,\nu_g-\nu_e)(f-1)d\sigma_e
+\int_{S_r}\Ric(Y,\nu_g-\nu_e)  d\sigma_e\\
&+\int_{S_r}\Ric(Y, \nu_e)(f-1) d\sigma_e+\int_{S_r}\Ric(Y, \nu_e)  d\sigma_e
\\
=&\int_{S_r}\Ric(Y,\nu_e)d\sigma_e+o(1),
\end{split}
\ee
where we  also used the fact $g_{ij}(x)-\delta_{ij}=o_2(|x|^{-\frac{n-2}2})$.
Using \eqref{eq-cm-1} and \eqref{eq-Ricci}, as $r\to\infty$, we then have
\be\label{eq-cm-5}
\begin{split}
&2\int_{S_r}R_{ij}Y^i\nu_e^jd\sigma_e\\
=&2\int_{S_r}\lf(\frac{\p}{\p x^k}\Gamma^k_{ji}-\frac{\p}{\p x^j}\Gamma^k_{ki}\ri)Y^i\nu_e^jd\sigma_e+o(1)\\
=&\int_{S_r}\lf[\lf(g^{ks}(g_{is,jk}+g_{js,ik}-g_{ij,sk}\ri)
-g^{ks}\lf(g_{ks,ij}+g_{is,kj}-g_{ki,sj}\ri)\ri]
Y^i\nu_e^jd\sigma_e+o(1)\\
=&\int_{S_r} \lf( g_{ik,jk}+g_{jk,ik}-g_{ij,kk} - g_{kk,ij}\ri)
Y^i\nu_e^jd\sigma_e+o(1).
\end{split}
\ee
  As in the proof of Theorem \ref{t-ADM}, we may assume that $g$ is smooth to obtain: 
\bee
\begin{split}
 &\int_{S_r}\lf(g_{ki,kj}+g_{kj,ki}-g_{ij,kk}-g_{kk,ij}\ri) Y^i\nu_e^jd\sigma_e\\
=&\int_{B(r)}\frac{\p}{\p x^j}\lf[\lf(g_{ki,kj}+g_{kj,ki}-g_{ij,kk}-g_{kk,ij}\ri) Y^i\ri]dv_e\\
=&\int_{B(r)}\lf(g_{kj,kij} -g_{kk,ijj}\ri)Y^idv_e+\int_{B(r)}\lf(g_{ki,kj}+g_{kj,ki}-g_{ij,kk}-g_{kk,ij}\ri)\frac{\p}{\p x^j} Y^i dv_e\\
=&\int_{S_r}\lf(g_{kj,kj} -g_{kk,jj}\ri) Y^i\nu_e^id\sigma_e-\int_{B(r)} (g_{kj,kj} -g_{kk,jj})\frac{\p}{\p x^i}Y^idv_e\\
&+\int_{B(r)}\lf(g_{ki,kj}+g_{kj,ki}-g_{ij,kk}-g_{kk,ij}\ri)\frac{\p}{\p x^j}Y^idv_e.
\end{split}
\eee
Since
$$
\frac{\p}{\p x^j}Y^i=\frac{\p}{\p x^j}\lf(|x|^2\delta^{\a i}-2x^\a x^i\ri) =2x^j\delta^{\a i}-2\delta^{\a j}x^i-2x^\a \delta^i_j ,
$$
we have
\bee
\begin{split}
(g_{kj,kj} -g_{kk,jj})\frac{\p}{\p x^i}Y^i =-2nx^\a(g_{kj,kj}-g_{kk,jj})
\end{split}
\eee
and
\bee
\begin{split}
 \lf(g_{ki,kj}+g_{kj,ki}-g_{ij,kk}-g_{kk,ij}\ri)\frac{\p}{\p x^j}Y^i
=&-2x^\a \lf(g_{ki,ki}+g_{ki,ki}-g_{ii,kk}-g_{kk,ii}\ri)\\
=&-4x^\a\lf(g_{ki,ki}-g_{kk,ii}\ri).
\end{split}
\eee
Hence,
\be
\begin{split}
&\int_{S_r}\lf(g_{ki,kj}+g_{kj,ki}-g_{ij,kk}-g_{kk,ij}\ri) Y^i\nu_e^jd\sigma_e\\
=&\int_{S_r}\lf(g_{kj,kj} -g_{kk,jj}\ri) Y^i\nu_e^jd\sigma_e+2(n-2)\int_{B(r)}x^\a\lf(g_{ki,ki}-g_{kk,ii}\ri)dv_e\\
=&\int_{S_r}\lf(g_{kj,kj} -g_{kk,jj}\ri) Y^i\nu_e^id\sigma_e+2(n-2)\int_{S_r}x^\a(g_{ki,k}-g_{kk,i})\nu_e^i   d \sigma_e \\
&-2(n-2)\int_{B(r)}  (g_{k\a,k}-g_{kk,\a})dv_e\\
=&\int_{S_r}\lf(g_{kj,kj} -g_{kk,jj}\ri) Y^i\nu_e^id\sigma_e+2(n-2)\int_{S_r}\lf[x^\a\lf(g_{ki,k}-g_{kk,i}\ri)\nu_e^i- \lf(g_{k\a}\nu_e^k-g_{kk}\nu_e^\a\ri)\ri]d\sigma_e.
\end{split}
\ee
 Here  $g_{k\a}, g_{kk}$ in the second term in the   integrand of the second integral can be replaced by $h_{k\a}, h_{kk}$ where $h_{ij}:=g_{ij}-\delta_{ij}$.

Using \eqref{eq-cm-1}--\eqref{eq-cm-3}, we may argue as before to conclude that
\bee
\int_{S_r}\lf(g_{kj,kj} -g_{kk,jj}\ri) Y^i\nu_e^id\sigma_e=\int_{S_r}R_g g(Y,\nu_g) d\sigma_g+o(1),
\eee
and
\bee
\begin{split}
\int_{S_r}&\lf[x^\a\lf(g_{ki,k}-g_{kk,i}\ri)\nu_e^i- \lf(g_{k\a}\nu_e^k-g_{kk}\nu_e^\a\ri)\ri]d\sigma_e\\
=&\int_{S_r}\lf[x^\a\lf(g_{ki,k}-g_{kk,i}\ri)\nu_g^i- \lf(g_{k\a}\nu_g^k-g_{kk}\nu_g^\a\ri)\ri]d\sigma_g+o(1)
\end{split}
\eee
as $r\to\infty$.
Combining these with \eqref{eq-cm-4} and \eqref{eq-cm-5}, and using   \eqref{eq-mass-2}  for the expression of $R_g$, we have

\be
\begin{split}
\int_{S_r}&\lf(R_{ij}-\frac12R_g g_{ij}\ri)Y^i \nu_g^j d\sigma_g\\
=& (n-2)\int_{S_r}\lf[x^\a(g_{ki,k}-g_{kk,i})\nu_e^id\sigma_e-  \lf(g_{k\a}\nu_e^k-g_{kk}\nu^\a\ri)\ri]d\sigma_e+o(1) \\
=&(n-2)\int_{S_r}\lf[x^\a(g_{ki,k}-g_{kk,i})\nu_g^i -  \lf(g_{k\a}\nu_g^k-g_{kk}\nu_g^\a\ri)\ri]d\sigma_g+o(1)\\
\end{split}
\ee
as $r\to\infty$. From this the result follows.
\end{proof}

\begin{remark} On an
asymptotically flat $(M^n, g)$
with $g_{ij} -\delta_{ij} =O_2(|x|^{-q})$, where $q>\frac{n-2}{2}$,
it follows from the proof of  Theorem \ref{t-ADM}
in Section 2
that
$\mathfrak{m}$ and hence  $\mathfrak{m}_{_I}$ are  defined if and only if
$
\lim_{r\to\infty}\int_{B(r)}R_g dv_g
$
exists. On the other hand,  if
in addition
 $g^{\odd}=O_2(|x|^{-q-1})$,
the computation in \cite{CorvinoSchoen2006} (also cf. \cite{ChanTam})
shows
 $\bccs$ and hence  $\bci$ are  defined if and only if
$
\lim_{r\to\infty}\int_{B(r)}x^i R_g dv_g
$
exists  for   $ i =1,\dots,n$.
Here $f= O_2(|x|^{-q})$ means   $|x|^{|\a|+q}|\p^\a f|\le C$ for all $\alpha$ with  $|\a|\le 2$.
\end{remark}

\end{document}